\undefined\usepackage{chngcntr}\fi
\def\0{{\boldsymbol 0}}
\def\b1{{\boldsymbol{1}}}
\def \b0{{\boldsymbol{0}}}
\newcommandx{\unsure}[2][1=]{\todo[linecolor=red,backgroundcolor=red!25,bordercolor=red,#1]{#2}}
\newcommandx{\change}[2][1=]{\todo[linecolor=blue,backgroundcolor=blue!25,bordercolor=blue,#1]{#2}}
\newcommandx{\info}[2][1=]{\todo[linecolor=OliveGreen,backgroundcolor=OliveGreen!25,bordercolor=OliveGreen,#1]{#2}}
\newcommandx{\improvement}[2][1=]{\todo[linecolor=Plum,backgroundcolor=Plum!25,bordercolor=Plum,#1]{#2}}
\begin{document}

\title{Trust-Region Stochastic Optimization with Variance Reduction Technique}

\author[1]{Xinshou Zheng}

\affil[1]{Boston University }

\date{}

\maketitle

\section{Introduction}\label{sec:1}

We propose a novel algorithm, TR-SVR, for solving unconstrained stochastic optimization problems. This method builds on the trust-region framework, which effectively balances local and global exploration in optimization tasks. TR-SVR incorporates variance reduction techniques to improve both computational efficiency and stability when addressing stochastic objective functions. The algorithm applies a sequential quadratic programming (SQP) approach within the trust-region framework, solving each subproblem approximately using variance-reduced gradient estimators. This integration ensures a robust convergence mechanism while maintaining efficiency, making TR-SVR particularly suitable for large-scale stochastic optimization challenges.

Unlike traditional SQP methods typically designed for deterministic or constrained optimization problems, TR-SVR is specifically tailored to address unconstrained stochastic settings. This makes it highly applicable to large-scale machine learning and data-driven tasks, where efficiency and scalability are crucial. 

The trust-region mechanism in TR-SVR dynamically adjusts the step size by defining a region where the quadratic approximation of the objective function is reliable. This ensures that the algorithm progresses steadily while avoiding excessively large or overly cautious updates. Simultaneously, variance reduction techniques inspired by methods like Stochastic Variance Reduced Gradient (SVRG) significantly reduce the noise in stochastic gradient estimates, thereby improving both the stability and accuracy of the optimization process. 

By iteratively refining the solution and adaptively modifying the trust-region radius based on the quality of gradient estimates and the current solution, TR-SVR achieves faster convergence rates and enhanced robustness, even in noisy environments typical of stochastic optimization problems.

\section{Literature Review}\label{sec:2}

Stochastic optimization is a rapidly growing field due to its pivotal role in applications like machine learning, signal processing, and control systems. Among its foundational techniques is Stochastic Gradient Descent (SGD), introduced by Robbins and Monro in the mid-20th century \cite{robbins1951stochastic}. SGD has gained popularity for its simplicity and scalability, particularly in large-scale optimization tasks. However, its high variance in gradient estimates often leads to slow convergence and instability. To mitigate this issue, advanced variance reduction techniques such as Stochastic Variance Reduced Gradient (SVRG) \cite{johnson2013accelerating}, Stochastic Average Gradient (SAGA) \cite{defazio2014saga}, and Stochastic Recursive Gradient Algorithm (SARAH) \cite{nguyen2017sarah} have been developed. These methods refine the gradient estimates through mechanisms like reference points or control variates, significantly improving performance.

In parallel, trust-region methods have emerged as robust tools for handling non-convex optimization problems, ensuring global convergence through adaptive step-size control \cite{conn2000trust}. These methods dynamically adjust a region around the iterate within which a quadratic model of the objective is trusted, making them highly effective in deterministic settings, as detailed in foundational works like Nocedal and Wright's "Numerical Optimization" \cite{nocedal2006numerical}. Efforts to extend trust-region methods to stochastic domains have gained momentum. Curtis et al. \cite{curtis2019fully} introduced a fully stochastic second-order trust-region method leveraging stochastic Hessian approximations. More recently, Fang et al. \cite{Fang2024Fully} proposed a stochastic trust-region sequential quadratic programming (TR-SQP) method tailored for equality-constrained problems, offering robust theoretical guarantees and practical implementations. These developments highlight the growing synergy between stochastic optimization and trust-region frameworks, paving the way for tackling increasingly complex optimization challenges.

Sequential Quadratic Programming (SQP) methods are widely recognized for their effectiveness in solving constrained optimization problems \cite{boggs1995sequential}. These methods operate by solving a sequence of quadratic subproblems that locally approximate the original nonlinear problem, gradually refining the solution at each iteration. While initially designed for deterministic optimization, recent advancements have extended SQP to stochastic settings. Notably, Berahas et al. \cite{berahas2022accelerating} proposed a stochastic SQP framework that integrates variance reduction techniques, significantly enhancing convergence rates for equality-constrained problems compared to traditional first-order approaches.

Despite these promising developments, existing stochastic SQP methods are predominantly tailored for constrained problems or require intricate adjustments to address challenges like non-convexity and noisy gradients. Our proposed TR-SVR algorithm addresses this gap by offering a novel framework for unconstrained stochastic optimization. Integrating trust-region principles with advanced variance reduction strategies, TR-SVR extends the applicability of stochastic SQP methods while preserving computational efficiency and delivering robust theoretical guarantees.
\section{Algorithm}\label{sec:3}

\subsection{Problem Description}\label{sec:3.1}

We consider the unconstrained stochastic optimization problem of the form:

\[
    \min_{\boldsymbol{x} \in \mathbb{R}^{d}} f(\boldsymbol{x}) = \frac{1}{N} \sum_{i=1}^{N} f_i(\boldsymbol{x}),
\]
where \( f_i(\boldsymbol{x}) \) represents individual stochastic functions, and \( f(\boldsymbol{x}) \) is the overall objective. This formulation is common in large-scale machine learning applications, where the objective function is typically a sum of loss functions over a dataset. The main challenge in solving such problems arises from the stochastic nature of the gradients, which can introduce high variance and slow down convergence.

Our goal is to develop an efficient algorithm that can handle large-scale problems by reducing the variance in gradient estimates while maintaining computational efficiency. To this end, we propose a novel algorithm, TR-SVR, which integrates variance reduction techniques into a trust-region-based sequential quadratic programming (SQP) framework. Unlike traditional methods that rely on full gradients or line-search techniques, our approach uses mini-batch gradient estimates and dynamically adjusts the trust-region radius to ensure robust performance in noisy environments.

The TR-SVR algorithm operates in two loops: an outer loop indexed by \(k\), and an inner loop indexed by \(s\). In each iteration, a quadratic approximation of the objective function is constructed using variance-reduced gradient estimates. A trust-region subproblem is then solved to update the current iterate. The trust-region radius is adjusted dynamically based on the quality of the solution at each step, ensuring that the algorithm takes appropriately sized steps to balance exploration and exploitation.

\subsection{Algorithm Description}\label{sec:3.2}

The TR-SVR algorithm is presented below. It combines trust-region principles with variance reduction techniques to solve unconstrained stochastic optimization problems efficiently.

\begin{algorithm}
    \caption{TR-SVR Algorithm}
    \begin{algorithmic}[1]
        \State \textbf{Input:} Initial iterate \(x_0 \in \mathbb{R}^d\), initial trust-region radius \(\Delta_0 > 0\), batch size \(b \in [1, N]\), maximum inner iterations \(S > 0\), parameters \(\eta_1, \eta_2 > 0\), and scaling factor \(\alpha > 0\).
        
        \For{$k = 0, 1, 2, \dots$}
            \State Set \(x_{k,0} = x_{k-1,S}\) (if \(k > 0\)) or initialize \(x_{k,0}\).
            \State Compute full gradient at \(x_{k,0}\): 
            \[
                g_{k,0} = \nabla f(x_{k,0}) = \frac{1}{N} \sum_{i=1}^{N} \nabla f_i(x_{k,0}).
            \]
            
            \For{$s = 0$ to $S-1$}
                \State Select a mini-batch \(I_{k,s} \subset [N]\) of size \(b\).
                \State Compute mini-batch gradient estimate:
                \[
                    \tilde{g}_{k,s} = \frac{1}{b} \sum_{i \in I_{k,s}} \nabla f_i(x_{k,s}).
                \]
                
                \State Compute variance-reduced gradient:
                \[
                    \bar{g}_{k,s} = \tilde{g}_{k,s} - (\tilde{g}_{k,0} - g_{k,0}),
                \]
                where \(g_{k,0}\) is the full gradient at \(x_{k,0}\).
                
                \State Solve the trust-region subproblem:
                \[
                    m(\Delta x) = g_{k,s}^{T} (\Delta x) + \frac{1}{2} (\Delta x)^T H_{k,s} (\Delta x),
                \]
                subject to 
                \[
                    ||\Delta x||_2 \leq \Delta_{k,s},
                \]
                where \(H_{k,s}\) is an approximation of the Hessian matrix.
                
                \State Update the iterate:
                \[
                    x_{k,s+1} = x_{k,s} + (\Delta x)_{k,s}.
                \]
                
                \State Adjust trust-region radius:
                If the reduction in objective function meets certain criteria (e.g., sufficient decrease), increase or decrease the trust-region radius as follows:
                \[
                    \Delta_{k,s+1} = 
                    \begin{cases}
                        {\eta_1}\alpha ||g_{k,s}|| & ||g_{k,s}|| < 1/\eta_1,\\
                        {\alpha}, & 1/\eta_1 < ||g_{k,s}|| < 1/\eta_2,\\
                        {\eta_2}\alpha ||g_{k,s}|| & ||g_{k,s}|| > 1/\eta_2.
                    \end{cases}
                \]
                
            \EndFor
            
            Update outer loop iterate: 
            Set \(x_{k+1,0} = x_{k,S}\).
        \EndFor
    \end{algorithmic}
\end{algorithm}

The TR-SVR algorithm iteratively refines both the solution and the trust-region radius using variance-reduced gradient estimates. The use of mini-batches ensures computational efficiency, particularly in large-scale settings, while the dynamic adjustment of the trust-region radius maintains stability, even in noisy environments. To further improve efficiency, the quadratic subproblems are solved approximately using Hessian approximations, thereby avoiding the need for expensive second-order computations.
\section{Assumptions}
\begin{assumption}
The objective function $f: \mathbb{R}^n \to \mathbb{R}$ is twice continuously differentiable and bounded below by a scalar $f_{\inf} := \inf_{x \in \mathbb{R}^n} f(x) > -\infty$. The gradient $\nabla f(x)$ is Lipschitz continuous with constant $L_g > 0$, and the Hessian matrix $\nabla^2 f(x)$ is uniformly bounded, i.e., $\|\nabla^2 f(x)\|_2 \leq L_H$ for all $x \in \mathbb{R}^n$. Additionally, the Hessian approximation $H_{k,s}$ satisfies $\|H_{k,s}\| \leq K_H$ for some constant $K_H > 0$ and for all iterations $k,s$.
\end{assumption}

\begin{assumption}
    The gradient approximation $\tilde{g}_{k,s}$ is an unbiased estimator of the true gradient of the objective function, i.e., $\mathbb{E}_{k,s}[\tilde{g}_{k,s}] = g_{k,s} = \nabla f(x_{k,s})$, where the expectation is conditioned on the event that the algorithm has reached $x_{k,s}$. The variance of the stochastic gradient is bounded, i.e., $\mathbb{E}_{k,s}[\|\tilde{g}_{k,s} - g_{k,s}\|^2] \leq \sigma_g^2$.
\end{assumption}

\begin{assumption}
    The variance-reduced gradient estimate satisfies a variance bound such that:
\[
    \mathbb{E}_{k,s}\left[\|\bar{g}_{k,s} - g_{k,s}\|^2\right] \leq \frac{L^2}{b}\|x_{k,s} - x_{k,0}\|^2,
\]
where $b$ is the mini-batch size and $L$ is a Lipschitz constant.
\end{assumption}

\begin{assumption}
    The iterates $x_{k,s}$ are contained within a compact convex set $\mathcal{X} \subseteq \mathbb{R}^n$. The objective function $f(x)$, its gradient $\nabla f(x)$, and its Hessian matrix are bounded over this set. Each component function $f_i(x)$ is continuously differentiable, and its gradient is Lipschitz continuous with constant $L > 0$.
\end{assumption}

\section{Convergence Analysis}\label{sec:5}

We begin by analyzing the trust-region subproblem and establishing key properties of the variance-reduced gradient estimates.

\subsection{Trust-Region Subproblem Properties}

At each iterate \(x_{k,s}\), the trust-region subproblem is formulated as:
\[
    \min_{{\Delta x_{k,s}}} m_1(\Delta x_{k,s}) = \bar{g}_{k,s}^T \Delta x_{k,s} + \frac{1}{2} \Delta x_{k,s}^T H_{k,s} \Delta x_{k,s}\quad \text{s.t. }\|\Delta x_{k,s}\| \leq \Delta_{k,s}
\]

For this subproblem, we only require the Cauchy decrease condition rather than an exact solution:
\[
    m_1(\Delta x_{k,s}) - m_1(0) \leq -\|\bar{g}_{k,s}\| \Delta_{k,s} + \frac{1}{2}\|H_{k,s}\| \Delta_{k,s}^2
\]

\subsection{Variance Reduction Properties}

\begin{lemma}[Variance Bound]\label{lem:5.1}
Let \(\bar{g}_{k,s}\) be computed as in Algorithm 1. Then for all \([k,s] \in \mathbb{N} \times S\), we have:
\[
    \mathbb{E}_{k,s}[\|\bar{g}_{k,s} - g(x_{k,s})\|^2] \leq \mu_{k,s}
\]
where \(\mu_{k,s} = \frac{L^2}{b}\|x_{k,s} - x_{k,0}\|^2\).
\end{lemma}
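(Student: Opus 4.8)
The plan is to unfold the definition of $\bar g_{k,s}$, recognize it (after centering) as an empirical average of i.i.d.\ zero-mean random vectors, and then apply the standard ``variance of a mini-batch average'' estimate together with the component-wise Lipschitz property of Assumption~4. In effect this lemma records how the bound postulated in Assumption~3 is produced from the Lipschitz continuity of each $\nabla f_i$.

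First I would rewrite the variance-reduced gradient in its SVRG form. Writing the control-variate term $\tilde g_{k,0}$ as $\frac1b\sum_{i\in I_{k,s}}\nabla f_i(x_{k,0})$ (evaluated on the \emph{current} mini-batch $I_{k,s}$) and $g_{k,0}=\nabla f(x_{k,0})$, one gets
\[
    \bar g_{k,s}-g(x_{k,s})
    =\frac1b\sum_{i\in I_{k,s}}\bigl[\nabla f_i(x_{k,s})-\nabla f_i(x_{k,0})\bigr]
    -\bigl[\nabla f(x_{k,s})-\nabla f(x_{k,0})\bigr].
\]
Setting $\zeta_i:=\nabla f_i(x_{k,s})-\nabla f_i(x_{k,0})$ and noting that, for $i$ drawn uniformly from $[N]$, $\mathbb{E}_{k,s}[\zeta_i]=\nabla f(x_{k,s})-\nabla f(x_{k,0})$, the right-hand side is precisely $\frac1b\sum_{i\in I_{k,s}}(\zeta_i-\mathbb{E}_{k,s}[\zeta_i])$, a centered mini-batch average (and, as a by-product, $\bar g_{k,s}$ is unbiased for $g(x_{k,s})$).

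Next I would take the conditional expectation of the squared norm. Since the indices in $I_{k,s}$ are sampled independently (with replacement; for sampling without replacement one picks up only a finite-population-correction factor $\le 1$), the cross terms vanish and
\[
    \mathbb{E}_{k,s}\Bigl[\bigl\|\bar g_{k,s}-g(x_{k,s})\bigr\|^2\Bigr]
    =\frac{1}{b^2}\sum_{i\in I_{k,s}}\mathbb{E}_{k,s}\bigl[\|\zeta_i-\mathbb{E}_{k,s}[\zeta_i]\|^2\bigr]
    \le\frac1b\,\mathbb{E}_{k,s}\bigl[\|\zeta_i\|^2\bigr],
\]
using $\mathbb{E}\|X-\mathbb{E}X\|^2\le\mathbb{E}\|X\|^2$. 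Finally, by Assumption~4 each $\nabla f_i$ is $L$-Lipschitz, so $\|\zeta_i\|\le L\|x_{k,s}-x_{k,0}\|$ holds deterministically; hence $\mathbb{E}_{k,s}[\|\zeta_i\|^2]\le L^2\|x_{k,s}-x_{k,0}\|^2$, which gives the claimed bound with $\mu_{k,s}=\frac{L^2}{b}\|x_{k,s}-x_{k,0}\|^2$.

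The only step I expect to require genuine care is the sampling bookkeeping: making explicit that $\tilde g_{k,0}$ is the control variate over the same mini-batch $I_{k,s}$ (so that the telescoping into the $\zeta_i$ is legitimate), and justifying the variance-of-the-mean identity — immediate for i.i.d.\ sampling with replacement, and true up to the usual correction factor otherwise. Everything after that is a single-line Lipschitz estimate, so I do not anticipate any further obstacle.
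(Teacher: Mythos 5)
Your proof is correct and follows essentially the same route as the paper's: both write $\bar g_{k,s}-g(x_{k,s})$ as the centered mini-batch average $\frac1b\sum_{i\in I_{k,s}}(\zeta_i-\mathbb{E}_{k,s}[\zeta_i])$ with $\zeta_i=\nabla f_i(x_{k,s})-\nabla f_i(x_{k,0})$, invoke independence to reduce the second moment of the sum to a sum of second moments, drop the centering via $\mathbb{E}\|X-\mathbb{E}X\|^2\le\mathbb{E}\|X\|^2$, and finish with the $L$-Lipschitz bound on each $\nabla f_i$. Your explicit bookkeeping about $\tilde g_{k,0}$ being evaluated on the same mini-batch $I_{k,s}$ is a welcome clarification that the paper leaves implicit, but it does not change the argument.
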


\begin{proof}
Let us denote:
\[
    J_{k,s} = \frac{1}{b} \sum (\nabla f_i(x_{k,s}) - \nabla f_i(x_{k,0}))
\]

By Assumption 4.2, we have that \(\mathbb{E}_{k,s}[J_{k,s}] = g(x_{k,s}) - g(x_{k,0})\).

Using the fact that \(\mathbb{E}[\|z-\mathbb{E}[z]\|^2] \leq \mathbb{E}[\|z\|^2]\) (from Assumption 4.3) and the property that for independent mean-zero random variables \(z_1, z_2, ..., z_n\):
\[
    \mathbb{E}[\|z_1 + z_2 + ... + z_n\|^2] = \mathbb{E}[\|z_1\|^2 + \|z_2\|^2 + ... + \|z_n\|^2]
\]

We can derive:
\begin{align*}
    \mathbb{E}_{k,s}[\|\bar{g}_{k,s} - g(x_{k,s})\|^2] &= \mathbb{E}_{k,s}[\|J_{k,s} + g(x_{k,0}) - g(x_{k,s})\|^2] \\
    &= \mathbb{E}_{k,s}[\|J_{k,s} - \mathbb{E}[J_{k,s}]\|^2] \\
    &= \frac{1}{b^2}\mathbb{E}_{k,s}[\|\sum(\nabla f_i(x_{k,s}) - \nabla f_i(x_{k,0}) - \mathbb{E}[J_{k,s}])\|^2] \\
    &= \frac{1}{b^2}\mathbb{E}_{k,s}[\sum\|\nabla f_i(x_{k,s}) - \nabla f_i(x_{k,0}) - \mathbb{E}[J_{k,s}]\|^2] \\
    &\leq \frac{1}{b^2}\mathbb{E}_{k,s}[\sum\|\nabla f_i(x_{k,s}) - \nabla f_i(x_{k,0})\|^2] \\
    &\leq \frac{L^2}{b}\|x_{k,s} - x_{k,0}\|^2
\end{align*}

The last inequality follows from Assumption 4.4, which ensures the Lipschitz continuity of the component gradients with constant \(L\).
\end{proof}

\subsection{One-Step Decrease Properties}

\begin{lemma}[One-Step Decrease]\label{lem:5.2}
For any iteration \((k,s)\), we have:
\[
    f(x_{k,s+1}) - f(x_{k,s}) \leq -\|\bar{g}_{k,s}\|\Delta_{k,s} + \frac{1}{2}\|H_{k,s}\|\Delta_{k,s}^2 + \|g(x_{k,s}) - \bar{g}_{k,s}\|\Delta_{k,s} + \frac{1}{2}(L_{\nabla f} + \|H_{k,s}\|)\Delta_{k,s}^2
\]
\end{lemma}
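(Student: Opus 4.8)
The plan is to combine the standard descent lemma (available from the $L_{\nabla f}$-Lipschitz continuity of $\nabla f$ in Assumption 1, writing $L_{\nabla f}=L_g$) with the Cauchy decrease condition already recorded for the subproblem model $m_1$. Write $\Delta x := (\Delta x)_{k,s}$, $g:=g(x_{k,s})$, $\bar g:=\bar g_{k,s}$, $H:=H_{k,s}$, $\Delta:=\Delta_{k,s}$, and recall $x_{k,s+1}=x_{k,s}+\Delta x$ with $\|\Delta x\|\le\Delta$. First I would apply the descent lemma at $x_{k,s}$:
\[
    f(x_{k,s+1}) - f(x_{k,s}) \;\le\; g^T \Delta x + \frac{L_{\nabla f}}{2}\|\Delta x\|^2 .
\]

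\textbf{Step 2: insert the model.} The right-hand side is expressed in terms of the \emph{true} gradient $g$, whereas the model $m_1$ and the Cauchy condition involve the \emph{variance-reduced} gradient $\bar g$; reconciling these is the only bookkeeping subtlety. I would write $g^T\Delta x = \bar g^T\Delta x + (g-\bar g)^T\Delta x$ and then recognize $\bar g^T\Delta x = m_1(\Delta x) - \tfrac12 \Delta x^T H\Delta x$, giving
\[
    f(x_{k,s+1}) - f(x_{k,s}) \;\le\; m_1(\Delta x) \;-\; \tfrac12\Delta x^T H \Delta x \;+\; (g-\bar g)^T\Delta x \;+\; \frac{L_{\nabla f}}{2}\|\Delta x\|^2 .
\]

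\textbf{Step 3: bound the four terms.} I would then estimate: (i) $m_1(\Delta x)=m_1(\Delta x)-m_1(0)\le -\|\bar g\|\Delta + \tfrac12\|H\|\Delta^2$ by the Cauchy decrease condition; (ii) $-\tfrac12\Delta x^T H\Delta x \le \tfrac12\|H\|\|\Delta x\|^2 \le \tfrac12\|H\|\Delta^2$; (iii) $(g-\bar g)^T\Delta x \le \|g-\bar g\|\,\|\Delta x\| \le \|g-\bar g\|\Delta$ by Cauchy--Schwarz and $\|\Delta x\|\le\Delta$; and (iv) $\tfrac{L_{\nabla f}}{2}\|\Delta x\|^2 \le \tfrac{L_{\nabla f}}{2}\Delta^2$. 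Summing these and collecting the two $\tfrac12\|H\|\Delta^2$ terms together with $\tfrac12 L_{\nabla f}\Delta^2$ into $\tfrac12(L_{\nabla f}+\|H\|)\Delta^2$ yields exactly the claimed inequality. There is no genuine obstacle here; the proof is a short chain of the descent lemma, one algebraic regrouping, the Cauchy bound, and Cauchy--Schwarz, and the only point requiring care is keeping $g$ versus $\bar g$ straight so that the error term $\|g(x_{k,s})-\bar g_{k,s}\|\Delta$ is isolated cleanly (it is this term that Lemma~\ref{lem:5.1} will later control in expectation).
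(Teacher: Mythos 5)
Your proposal is correct and follows essentially the same route as the paper's proof: descent lemma, split $g^T\Delta x$ into the model contribution plus the error term $(g-\bar g)^T\Delta x$, apply the Cauchy decrease condition and Cauchy--Schwarz, and bound $\|\Delta x\|$ by $\Delta_{k,s}$. The paper merely organizes the same algebra by first subtracting $m_1(\Delta x_{k,s})$ from both sides before bounding, so there is no substantive difference.
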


\begin{proof}
By Assumption 4.1, which ensures twice continuous differentiability and Lipschitz continuity of the gradient, we can write:
\[
    f(x_{k,s+1}) \leq f(x_{k,s}) + g(x_{k,s})^T\Delta x_{k,s} + \frac{1}{2}L_{\nabla f}\|\Delta x_{k,s}\|^2
\]

Using the trust-region subproblem formulation and the Cauchy decrease condition:
\[
    f(x_{k,s+1}) - f(x_{k,s}) - \bar{g}_{k,s}^T\Delta x_{k,s} - \frac{1}{2}\Delta x_{k,s}^T H_{k,s}\Delta x_{k,s}
\]
\[
    \leq g(x_{k,s})^T\Delta x_{k,s} + \frac{1}{2}L_{\nabla f}\|\Delta x_{k,s}\|^2 - \bar{g}_{k,s}^T\Delta x_{k,s} - \frac{1}{2}\Delta x_{k,s}^T H_{k,s}\Delta x_{k,s}
\]
\[
    = (g(x_{k,s}) - \bar{g}_{k,s})^T\Delta x_{k,s} + \frac{1}{2}L_{\nabla f}\|\Delta x_{k,s}\|^2 - \frac{1}{2}\Delta x_{k,s}^T H_{k,s}\Delta x_{k,s}
\]

Using the Cauchy-Schwarz inequality and the fact that \(\|\Delta x_{k,s}\| \leq \Delta_{k,s}\):
\[
    \leq \|g(x_{k,s}) - \bar{g}_{k,s}\|\|\Delta x_{k,s}\| + \frac{1}{2}(L_{\nabla f} + \|H_{k,s}\|)\|\Delta x_{k,s}\|^2
\]
\[
    \leq \|g(x_{k,s}) - \bar{g}_{k,s}\|\Delta_{k,s} + \frac{1}{2}(L_{\nabla f} + \|H_{k,s}\|)\Delta_{k,s}^2
\]

Therefore, combining with the trust-region subproblem solution property:
\[
    f(x_{k,s+1}) - f(x_{k,s}) \leq -\|\bar{g}_{k,s}\|\Delta_{k,s} + \frac{1}{2}\|H_{k,s}\|\Delta_{k,s}^2 + \|g(x_{k,s}) - \bar{g}_{k,s}\|\Delta_{k,s} + \frac{1}{2}(L_{\nabla f} + \|H_{k,s}\|)\Delta_{k,s}^2
\]

This result relies on Assumptions 4.1 (Lipschitz continuity), and 4.4 (boundedness of iterates).
\end{proof}

\subsection{Expected Decrease Properties}

\begin{lemma}[Expected Decrease]\label{lem:5.3}
When \(\alpha \leq \frac{1}{2(L_{\nabla f} + 2K_H)}\), we have:
\[
    \mathbb{E}_{k,s}[f(x_{k,s+1})] - f(x_{k,s}) \leq -\frac{1}{2}\alpha\|g(x_{k,s})\|^2 + \frac{1}{2}(L_{\nabla f} + 2K_H)\alpha^2\mathbb{E}_{k,s}[\|g(x_{k,s}) - \bar{g}_{k,s}\|^2]
\]
\end{lemma}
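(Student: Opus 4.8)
The plan is to upper-bound the one-step change $f(x_{k,s+1})-f(x_{k,s})$ by a term linear in the step plus a quadratic remainder, substitute the explicit trust-region step produced by Algorithm~1, and then take the conditional expectation $\mathbb{E}_{k,s}$, using unbiasedness of $\bar g_{k,s}$ \emph{before} any norm bound so that the stochastic part of the linear term becomes $-\alpha\|g(x_{k,s})\|^{2}$ while the residual variance contributions cancel. The single most important observation is that one should \emph{not} feed Lemma~\ref{lem:5.2} directly into this argument: its Cauchy--Schwarz estimate $(g(x_{k,s})-\bar g_{k,s})^{T}\Delta x_{k,s}\le \|g(x_{k,s})-\bar g_{k,s}\|\,\Delta_{k,s}$ discards the sign structure that makes the cancellation happen, and after taking expectations it leaves a term of order $\alpha\,\|g(x_{k,s})\|\,\sqrt{\mathbb{E}_{k,s}[\|g(x_{k,s})-\bar g_{k,s}\|^{2}]}$ that cannot be absorbed into an $O(\alpha^{2})$ error. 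So I would re-derive the one-step bound, keeping $(g(x_{k,s})-\bar g_{k,s})^{T}\Delta x_{k,s}$ as an inner product.

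In detail: first, in the regime of the radius update where $\Delta_{k,s}=\alpha\|\bar g_{k,s}\|$ (for the cleanest argument one takes $\eta_{1}=\eta_{2}=1$ so the three cases collapse to this; general $\eta_{i}$ only rescale $\alpha$), the subproblem step taken along $-\bar g_{k,s}$ out to the boundary is $\Delta x_{k,s}=-\alpha\bar g_{k,s}$, so $\|\Delta x_{k,s}\|=\Delta_{k,s}=\alpha\|\bar g_{k,s}\|$. Second, combine the descent lemma from Assumption 4.1, $f(x_{k,s+1})\le f(x_{k,s})+g(x_{k,s})^{T}\Delta x_{k,s}+\tfrac12 L_{\nabla f}\|\Delta x_{k,s}\|^{2}$, with the Cauchy-decrease inequality for the model and $\|H_{k,s}\|\le K_{H}$, writing $g(x_{k,s})^{T}\Delta x_{k,s}=\bar g_{k,s}^{T}\Delta x_{k,s}+(g(x_{k,s})-\bar g_{k,s})^{T}\Delta x_{k,s}$ and bounding $-\tfrac12\Delta x_{k,s}^{T}H_{k,s}\Delta x_{k,s}\le\tfrac12 K_{H}\Delta_{k,s}^{2}$, to obtain
\[
    f(x_{k,s+1})-f(x_{k,s})\le -\alpha\|\bar g_{k,s}\|^{2}+\tfrac12(L_{\nabla f}+2K_{H})\alpha^{2}\|\bar g_{k,s}\|^{2}+(g(x_{k,s})-\bar g_{k,s})^{T}\Delta x_{k,s}.
\]
Third, take $\mathbb{E}_{k,s}$ and set $V_{k,s}:=\mathbb{E}_{k,s}[\|g(x_{k,s})-\bar g_{k,s}\|^{2}]$. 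The bias--variance identity gives $\mathbb{E}_{k,s}[\|\bar g_{k,s}\|^{2}]=\|g(x_{k,s})\|^{2}+V_{k,s}$, and since $\Delta x_{k,s}=-\alpha\bar g_{k,s}$ and $\mathbb{E}_{k,s}[\bar g_{k,s}]=g(x_{k,s})$,
\[
    \mathbb{E}_{k,s}\big[(g(x_{k,s})-\bar g_{k,s})^{T}\Delta x_{k,s}\big]=-\alpha\,\mathbb{E}_{k,s}\big[(g(x_{k,s})-\bar g_{k,s})^{T}\bar g_{k,s}\big]=\alpha\,V_{k,s},
\]
so the $-\alpha V_{k,s}$ coming from $-\alpha\mathbb{E}_{k,s}[\|\bar g_{k,s}\|^{2}]$ is exactly cancelled, leaving
\[
    \mathbb{E}_{k,s}[f(x_{k,s+1})]-f(x_{k,s})\le -\alpha\|g(x_{k,s})\|^{2}+\tfrac12(L_{\nabla f}+2K_{H})\alpha^{2}\big(\|g(x_{k,s})\|^{2}+V_{k,s}\big).
\]
Finally, the hypothesis $\alpha\le\frac{1}{2(L_{\nabla f}+2K_{H})}$ gives $\tfrac12(L_{\nabla f}+2K_{H})\alpha^{2}\le\tfrac14\alpha$, so the stray $\tfrac12(L_{\nabla f}+2K_{H})\alpha^{2}\|g(x_{k,s})\|^{2}$ is absorbed into $-\alpha\|g(x_{k,s})\|^{2}$, yielding $-\tfrac34\alpha\|g(x_{k,s})\|^{2}$ (hence the claimed $-\tfrac12\alpha\|g(x_{k,s})\|^{2}$) plus the stated variance term.

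The main obstacle is precisely the bookkeeping of that cross term: one must evaluate $\mathbb{E}_{k,s}[(g(x_{k,s})-\bar g_{k,s})^{T}\Delta x_{k,s}]$ exactly, which requires knowing the step as an explicit deterministic multiple of $\bar g_{k,s}$, and one must avoid bounding norms prematurely (which is why Lemma~\ref{lem:5.2} cannot be invoked as a black box here). Secondary points are to make the radius-update case analysis harmless — fixing $\eta_{1}=\eta_{2}=1$, or tracking the $\eta_{i}$ factors, which only multiply $\alpha$ — and to use the Cauchy-decrease inequality with the boundary step $\|\Delta x_{k,s}\|=\Delta_{k,s}$ rather than merely $\|\Delta x_{k,s}\|\le\Delta_{k,s}$, together with $\|H_{k,s}\|\le K_{H}$, to produce the combined constant $L_{\nabla f}+2K_{H}$.
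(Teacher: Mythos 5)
Your derivation is correct and proves the stated bound (in fact with the stronger coefficient $-\tfrac34\alpha$ before weakening to $-\tfrac12\alpha$), but it takes a genuinely different route from the paper, and your ``most important observation'' --- that Lemma~\ref{lem:5.2} cannot be fed into this argument --- is a misdiagnosis. The cross term that Lemma~\ref{lem:5.2} produces after substituting $\Delta_{k,s}=\alpha\|\bar g_{k,s}\|$ is $\alpha\|g(x_{k,s})-\bar g_{k,s}\|\,\|\bar g_{k,s}\|$ (note the factor $\|\bar g_{k,s}\|$, not $\|g(x_{k,s})\|$), and the paper handles it with Young's inequality $ab\le\tfrac12 a^2+\tfrac12 b^2$: the piece $\tfrac12\alpha\|\bar g_{k,s}\|^2$ halves the leading $-\alpha\|\bar g_{k,s}\|^2$, and after applying the bias--variance identity $\mathbb{E}_{k,s}[\|\bar g_{k,s}\|^2]=\|g(x_{k,s})\|^2+V_{k,s}$ the residual $-\tfrac12\alpha V_{k,s}$ exactly cancels the $+\tfrac12\alpha V_{k,s}$ from the other Young piece. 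No term of order $\alpha\|g(x_{k,s})\|\sqrt{V_{k,s}}$ ever appears, so the cancellation you engineer by keeping the inner product is also achieved, less directly, by the norm-based route. What your approach buys is a tighter constant and a cleaner identity, $\mathbb{E}_{k,s}[(g(x_{k,s})-\bar g_{k,s})^{T}\Delta x_{k,s}]=\alpha V_{k,s}$; what it costs is generality: that identity requires the step to be \emph{exactly} $\Delta x_{k,s}=-\alpha\bar g_{k,s}$, a deterministic multiple of $\bar g_{k,s}$, whereas the paper's argument only assumes the Cauchy decrease condition and $\|\Delta x_{k,s}\|\le\Delta_{k,s}$, and hence tolerates any approximate subproblem solution. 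A further point in your favor: your derivation actually establishes the lemma as stated, whereas the paper's own proof terminates with an extra $\tfrac12(L_{\nabla f}+2K_H)\alpha^2\|g(x_{k,s})\|^2$ term that is only absorbed later in Lemma~\ref{lem:5.4}; your use of $\alpha\le\tfrac{1}{2(L_{\nabla f}+2K_H)}$ to fold that term into the leading $-\alpha\|g(x_{k,s})\|^2$ is what the paper's hypothesis is evidently for.
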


\begin{proof}
Since \(\Delta_{k,s} = \alpha\|\bar{g}_{k,s}\|\) (by the trust-region radius update rule), we can write:

\begin{align*}
f(x_{k,s+1}) - f(x_{k,s}) &\leq -\alpha\|\bar{g}_{k,s}\|^2 + \frac{1}{2}\|H_{k,s}\|\alpha^2\|\bar{g}_{k,s}\|^2 \\
&+ \alpha\|g(x_{k,s}) - \bar{g}_{k,s}\|\|\bar{g}_{k,s}\| \\
&+ \frac{1}{2}(L_{\nabla f} + \|H_{k,s}\|)\alpha^2\|\bar{g}_{k,s}\|^2
\end{align*}

Using Assumption 4.1, which ensures \(\|H_{k,s}\| \leq K_H\), we have:

\begin{align*}
f(x_{k,s+1}) - f(x_{k,s}) &\leq -\alpha\|\bar{g}_{k,s}\|^2 + \frac{1}{2}K_H\alpha^2\|\bar{g}_{k,s}\|^2 \\
&+ \alpha\|g(x_{k,s}) - \bar{g}_{k,s}\|\|\bar{g}_{k,s}\| \\
&+ \frac{1}{2}(L_{\nabla f} + K_H)\alpha^2\|\bar{g}_{k,s}\|^2 \\
&= -\alpha\|\bar{g}_{k,s}\|^2 + \alpha\|g(x_{k,s}) - \bar{g}_{k,s}\|\|\bar{g}_{k,s}\| \\
&+ \frac{1}{2}(L_{\nabla f} + 2K_H)\alpha^2\|\bar{g}_{k,s}\|^2
\end{align*}

Using the inequality \(ab \leq \frac{1}{2}a^2 + \frac{1}{2}b^2\), we get:

\begin{align*}
&\leq -\alpha\|\bar{g}_{k,s}\|^2 + \frac{1}{2}\alpha\|g(x_{k,s}) - \bar{g}_{k,s}\|^2 + \frac{1}{2}\alpha\|\bar{g}_{k,s}\|^2 \\
&+ \frac{1}{2}(L_{\nabla f} + 2K_H)\alpha^2\|\bar{g}_{k,s}\|^2 \\
&= -\frac{1}{2}\alpha\|\bar{g}_{k,s}\|^2 + \frac{1}{2}\alpha\|g(x_{k,s}) - \bar{g}_{k,s}\|^2 \\
&+ \frac{1}{2}(L_{\nabla f} + 2K_H)\alpha^2\|\bar{g}_{k,s}\|^2
\end{align*}

Taking expectation conditional on \(\mathcal{F}_{k,s}\), and since \(\bar{g}_{k,s}\) is an unbiased estimator of \(g(x_{k,s})\) (by Assumption 4.2), we have:

\[
\mathbb{E}_{k,s}[\|\bar{g}_{k,s}\|^2] = \mathbb{E}_{k,s}[\|g(x_{k,s}) - \bar{g}_{k,s}\|^2] + \|g(x_{k,s})\|^2
\]

Therefore:

\begin{align*}
\mathbb{E}_{k,s}[f(x_{k,s+1})] - f(x_{k,s}) &\leq -\frac{1}{2}\alpha\|g(x_{k,s})\|^2 \\
&+ \frac{1}{2}(L_{\nabla f} + 2K_H)\alpha^2\mathbb{E}_{k,s}[\|g(x_{k,s}) - \bar{g}_{k,s}\|^2] \\
&+ \frac{1}{2}(L_{\nabla f} + 2K_H)\alpha^2\|g(x_{k,s})\|^2
\end{align*}

This proof relies on Assumptions 4.1 (Lipschitz continuity), and 4.2 (unbiased gradient estimates).
\end{proof}

\begin{lemma}[Expected Decrease Bound]\label{lem:5.4}
When \(\alpha \leq \frac{1}{2(L_{\nabla f} + 2K_H)}\), we have:
\[
    \mathbb{E}_{k,s}[f(x_{k,s+1})] - f(x_{k,s}) \leq -\frac{1}{4}\alpha\|g(x_{k,s})\|^2 + \frac{1}{2}(L_{\nabla f} + 2K_H)\alpha^2\mathbb{E}_{k,s}[\|g(x_{k,s}) - \bar{g}_{k,s}\|^2]
\]
\end{lemma}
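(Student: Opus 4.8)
The plan is to obtain this bound directly from Lemma~\ref{lem:5.3} rather than redoing the whole computation. Tracing through the proof of Lemma~\ref{lem:5.3}, its final display actually establishes the slightly stronger inequality
\[
    \mathbb{E}_{k,s}[f(x_{k,s+1})] - f(x_{k,s}) \leq -\frac{1}{2}\alpha\|g(x_{k,s})\|^2 + \frac{1}{2}(L_{\nabla f} + 2K_H)\alpha^2\mathbb{E}_{k,s}[\|g(x_{k,s}) - \bar{g}_{k,s}\|^2] + \frac{1}{2}(L_{\nabla f} + 2K_H)\alpha^2\|g(x_{k,s})\|^2 ,
\]
so the whole task reduces to absorbing the trailing term $\frac{1}{2}(L_{\nabla f} + 2K_H)\alpha^2\|g(x_{k,s})\|^2$ into the leading negative term $-\frac{1}{2}\alpha\|g(x_{k,s})\|^2$.

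The key step is the step-size restriction $\alpha \leq \frac{1}{2(L_{\nabla f} + 2K_H)}$. Rearranging gives $(L_{\nabla f} + 2K_H)\alpha \leq \frac{1}{2}$, and multiplying this by the positive quantity $\frac{1}{2}\alpha$ yields $\frac{1}{2}(L_{\nabla f} + 2K_H)\alpha^2 \leq \frac{1}{4}\alpha$. Consequently
\[
    \frac{1}{2}(L_{\nabla f} + 2K_H)\alpha^2\|g(x_{k,s})\|^2 \leq \frac{1}{4}\alpha\|g(x_{k,s})\|^2 ,
\]
and substituting this into the inequality above and combining $-\frac{1}{2}\alpha\|g(x_{k,s})\|^2 + \frac{1}{4}\alpha\|g(x_{k,s})\|^2 = -\frac{1}{4}\alpha\|g(x_{k,s})\|^2$ gives exactly the claimed bound.

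There is essentially no hard step here; the only point requiring care is that the printed statement of Lemma~\ref{lem:5.3} omits the $\frac{1}{2}(L_{\nabla f} + 2K_H)\alpha^2\|g(x_{k,s})\|^2$ term that its own proof produces, so one must either cite the last line of that proof or, equivalently, re-run the short chain of estimates — the second-order expansion of Lemma~\ref{lem:5.2} with $\Delta_{k,s} = \alpha\|\bar g_{k,s}\|$, the bound $\|H_{k,s}\| \le K_H$ from Assumption~4.1, Young's inequality $ab \le \tfrac12 a^2 + \tfrac12 b^2$, and the bias–variance identity $\mathbb{E}_{k,s}[\|\bar g_{k,s}\|^2] = \mathbb{E}_{k,s}[\|g(x_{k,s}) - \bar g_{k,s}\|^2] + \|g(x_{k,s})\|^2$ from Assumption~4.2 — and then apply the step-size bound at the very end. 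Either route uses only Assumptions~4.1 and 4.2.
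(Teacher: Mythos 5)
Your proposal is correct and matches the paper's own proof of Lemma~\ref{lem:5.4}: both start from the three-term inequality established at the end of the proof of Lemma~\ref{lem:5.3} and use $\alpha \leq \frac{1}{2(L_{\nabla f}+2K_H)}$ to absorb $\frac{1}{2}(L_{\nabla f}+2K_H)\alpha^2\|g(x_{k,s})\|^2$ into the leading negative term. Your observation that the printed statement of Lemma~\ref{lem:5.3} omits this trailing term (which the paper itself implicitly relies on when it ``starts from Lemma 5.3'') is accurate and worth flagging.
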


\begin{proof}
Starting from Lemma 5.3, we have:
\[
    \mathbb{E}_{k,s}[f(x_{k,s+1})] - f(x_{k,s}) \leq -\frac{1}{2}\alpha\|g(x_{k,s})\|^2 + \frac{1}{2}(L_{\nabla f} + 2K_H)\alpha^2\mathbb{E}_{k,s}[\|g(x_{k,s}) - \bar{g}_{k,s}\|^2] + \frac{1}{2}(L_{\nabla f} + 2K_H)\alpha^2\|g(x_{k,s})\|^2
\]

Since \(\alpha \leq \frac{1}{2(L_{\nabla f} + 2K_H)}\), we have:
\[
    \frac{1}{2}(L_{\nabla f} + 2K_H)\alpha^2\|g(x_{k,s})\|^2 \leq \frac{1}{4}\alpha\|g(x_{k,s})\|^2
\]

Therefore:
\[
    -\frac{1}{2}\alpha\|g(x_{k,s})\|^2 + \frac{1}{2}(L_{\nabla f} + 2K_H)\alpha^2\|g(x_{k,s})\|^2 \leq -\frac{1}{4}\alpha\|g(x_{k,s})\|^2
\]

Thus:
\[
    \mathbb{E}_{k,s}[f(x_{k,s+1})] - f(x_{k,s}) \leq -\frac{1}{4}\alpha\|g(x_{k,s})\|^2 + \frac{1}{2}(L_{\nabla f} + 2K_H)\alpha^2\mathbb{E}_{k,s}[\|g(x_{k,s}) - \bar{g}_{k,s}\|^2]
\]

This proof relies on Assumptions 4.1 (Lipschitz continuity of gradient), and 4.2 (unbiased gradient estimates). The Lipschitz constant \(L_{\nabla f}\) comes from Assumption 4.1.
\end{proof}

\begin{theorem}[Global Convergence]\label{thm:5.5}
Let \(\{x_{k,s}\}\) be the sequence generated by Algorithm 1. Under Assumptions 4.1-4.4, for any \(K \geq 0\), we have:
\[
    \mathbb{E}\left[\frac{1}{(K+1)S}\sum_{k=0}^K\sum_{s=0}^{S-1}\|g(x_{k,s})\|^2\right] \leq \frac{\mathbb{E}[f(x_{0,0})] - f_{\inf}}{(K+1)S \cdot \Lambda_{\min}}
\]
where \(\Lambda_{\min} = \min_{s\in[S]} \Lambda_s\).
\end{theorem}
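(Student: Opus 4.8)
The plan is to convert the per-step estimate of Lemma~\ref{lem:5.4} into a double-telescoping sum over the outer and inner loops. First I would combine Lemma~\ref{lem:5.4} with the variance bound of Lemma~\ref{lem:5.1} and take total expectations (the tower property disposes of the conditioning in $\mathbb{E}_{k,s}[\cdot]$), giving for every $(k,s)$
\[
    \mathbb{E}[f(x_{k,s+1})] - \mathbb{E}[f(x_{k,s})] \leq -\tfrac14\alpha\,\mathbb{E}[\|g(x_{k,s})\|^2] + \tfrac12(L_{\nabla f}+2K_H)\tfrac{L^2}{b}\alpha^2\,\mathbb{E}[\|x_{k,s}-x_{k,0}\|^2].
\]
The only term that does not telescope is the inner-loop drift $\|x_{k,s}-x_{k,0}\|^2$, which I would control through the trust-region step bound $\|(\Delta x)_{k,j}\| \le \Delta_{k,j} = \alpha\|\bar g_{k,j}\|$: by the triangle inequality and Cauchy--Schwarz, $\|x_{k,s}-x_{k,0}\|^2 \le s\alpha^2\sum_{j=0}^{s-1}\|\bar g_{k,j}\|^2$, and since $\bar g_{k,j}$ is unbiased, $\mathbb{E}[\|\bar g_{k,j}\|^2] = \mathbb{E}[\|g(x_{k,j})\|^2] + \mathbb{E}[\|\bar g_{k,j}-g(x_{k,j})\|^2] \le \mathbb{E}[\|g(x_{k,j})\|^2] + \tfrac{L^2}{b}\mathbb{E}[\|x_{k,j}-x_{k,0}\|^2]$.

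Next I would sum the displayed inequality over $s = 0,\dots,S-1$ for fixed outer index $k$; the left side collapses to $\mathbb{E}[f(x_{k,S})] - \mathbb{E}[f(x_{k,0})]$. Substituting the drift bound on the right produces a coupled linear system in $\{\mathbb{E}[\|x_{k,s}-x_{k,0}\|^2]\}_{s}$; resolving it by a discrete Gr\"onwall-type argument (legitimate once $\alpha$ is small enough, e.g.\ $S^2\alpha^2 L^2/b \le \tfrac12$ in addition to the hypothesis $\alpha \le \tfrac{1}{2(L_{\nabla f}+2K_H)}$ of Lemma~\ref{lem:5.4}) bounds each $\mathbb{E}[\|x_{k,s}-x_{k,0}\|^2]$ by a constant multiple of $\alpha^2\sum_{j<s}\mathbb{E}[\|g(x_{k,j})\|^2]$. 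Collecting the coefficient multiplying each $\mathbb{E}[\|g(x_{k,s})\|^2]$ — a net negative number of the form $\tfrac14\alpha$ minus an $O(\alpha^3)$ variance correction whose magnitude grows with the number of later inner steps $s'>s$ it feeds into — and denoting it $-\Lambda_s$, I obtain $\mathbb{E}[f(x_{k,S})] - \mathbb{E}[f(x_{k,0})] \le -\sum_{s=0}^{S-1}\Lambda_s\,\mathbb{E}[\|g(x_{k,s})\|^2]$ with every $\Lambda_s > 0$ and independent of $k$.

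Finally I would sum over $k = 0,\dots,K$. Since $x_{k+1,0} = x_{k,S}$, the left side telescopes to $\mathbb{E}[f(x_{K,S})] - \mathbb{E}[f(x_{0,0})] \ge f_{\inf} - \mathbb{E}[f(x_{0,0})]$, and replacing each $\Lambda_s$ by $\Lambda_{\min} = \min_{s\in[S]}\Lambda_s$ yields $\Lambda_{\min}\sum_{k=0}^K\sum_{s=0}^{S-1}\mathbb{E}[\|g(x_{k,s})\|^2] \le \mathbb{E}[f(x_{0,0})] - f_{\inf}$. Dividing by $(K+1)S\,\Lambda_{\min}$ and pulling the finite sum inside the expectation gives the claimed estimate.

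I expect the main obstacle to be the recursion-resolution step: disentangling the mutual dependence between the iterate drift $\|x_{k,s}-x_{k,0}\|^2$ and the squared estimator norms $\|\bar g_{k,j}\|^2$ (which reintroduce drift terms), and then verifying that the resulting per-step coefficients $\Lambda_s$ are genuinely positive under a clean, explicit smallness condition relating $\alpha$, $S$, $b$, $L$, $L_{\nabla f}$, and $K_H$. The remaining manipulations are routine bookkeeping with telescoping sums and the tower property.
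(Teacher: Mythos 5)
Your proposal is correct in outline and reaches the theorem by the same skeleton as the paper --- Lemma~\ref{lem:5.4} plus Lemma~\ref{lem:5.1}, absorb the inner-loop drift $\|x_{k,s}-x_{k,0}\|^2$ into per-step coefficients $\Lambda_s$, then telescope over $s$ and $k$ using $x_{k+1,0}=x_{k,S}$ and $f\ge f_{\inf}$ --- but you resolve the crux (the drift) by a genuinely different mechanism. You unroll $\|x_{k,s}-x_{k,0}\|^2 \le s\alpha^2\sum_{j<s}\|\bar g_{k,j}\|^2$ via Cauchy--Schwarz and close the resulting coupled system with a discrete Gr\"onwall argument under an explicit smallness condition of the form $S^2\alpha^2L^2/b \le \tfrac12$. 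The paper instead builds a Lyapunov sequence $R_{k,s} = f(x_{k,s}) + \lambda_s\|x_{k,s}-x_{k,0}\|^2$: it derives a one-step recursion $\mathbb{E}_{k,s}[\|x_{k,s+1}-x_{k,0}\|^2] \le (1+\tfrac{1}{\alpha z})\alpha^2\|g(x_{k,s})\|^2 + (1+\alpha z+(\alpha^2+\tfrac{\alpha}{z})\tfrac{L^2}{b})\|x_{k,s}-x_{k,0}\|^2$ via Young's inequality with a free parameter $z$, and chooses the coefficients $\lambda_s$ by a backward recursion (implicitly with $\lambda_S=0$) so that $\mathbb{E}[R_{k,s+1}-R_{k,s}] \le -\Lambda_s\mathbb{E}[\|g(x_{k,s})\|^2]$; telescoping is then immediate because $R_{k,0}=f(x_{k,0})$ and $R_{k,S}\ge f(x_{k+1,0})$. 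The Lyapunov route folds the $S$-dependence into the recursion for $\lambda_s$ rather than paying an explicit factor of $s$ up front, and keeps the bookkeeping strictly one-step; your route is more elementary and has the virtue of making the required step-size restriction fully explicit, which the paper does not do (it never verifies $\Lambda_s>0$, never states the terminal condition on $\lambda_S$, and in fact writes two conflicting defining equations for $\lambda_s$, the second of which is evidently meant to define $\Lambda_s$). Either way, to call the argument complete you must carry out the coefficient-collection step you defer --- exhibiting $\Lambda_s>0$ under your stated condition --- since that is precisely where the paper's own writeup is thinnest.
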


\begin{proof}
From Lemma 5.4, we have:
\[
    \mathbb{E}_{k,s}[f(x_{k,s+1})] - f(x_{k,s}) \leq -\frac{1}{4}\alpha\|g(x_{k,s})\|^2 + \frac{1}{2}(L_{\nabla f} + 2K_H)\alpha^2\mathbb{E}_{k,s}[\|g(x_{k,s}) - \bar{g}_{k,s}\|^2]
\]

And from Lemma 5.1:
\[
    \mathbb{E}_{k,s}[\|\bar{g}_{k,s} - g(x_{k,s})\|^2] \leq \frac{L^2}{b}\|x_{k,s} - x_{k,0}\|^2
\]

Therefore:
\[
    \mathbb{E}_{k,s}[f(x_{k,s+1})] - f(x_{k,s}) \leq -\frac{1}{4}\alpha\|g(x_{k,s})\|^2 + \frac{1}{2}(L_{\nabla f} + 2K_H)\alpha^2\frac{L^2}{b}\|x_{k,s} - x_{k,0}\|^2
\]

Notice that:
\begin{align*}
    \mathbb{E}_{k,s}[\|x_{k,s+1} - x_{k,0}\|^2] &= \mathbb{E}_{k,s}[\|x_{k,s+1} - x_{k,s} + x_{k,s} - x_{k,0}\|^2] \\
    &= \mathbb{E}_{k,s}[\|x_{k,s+1} - x_{k,s}\|^2] \\
    &+ 2\mathbb{E}_{k,s}[(x_{k,s+1} - x_{k,s})^T(x_{k,s} - x_{k,0})] \\
    &+ \mathbb{E}_{k,s}[\|x_{k,s} - x_{k,0}\|^2] \\
    &\leq \mathbb{E}_{k,s}[\Delta_{k,s}^2] + \frac{1}{\alpha z}\mathbb{E}_{k,s}[\|x_{k,s+1} - x_{k,s}\|^2] \\
    &+ \alpha z\mathbb{E}_{k,s}[\|x_{k,s} - x_{k,0}\|^2] + \mathbb{E}_{k,s}[\|x_{k,s} - x_{k,0}\|^2] \\
    &\leq (1+\frac{1}{\alpha z})\mathbb{E}_{k,s}[\Delta_{k,s}^2] + (1+\alpha z)\mathbb{E}_{k,s}[\|x_{k,s} - x_{k,0}\|^2] \\
    &= (1+\frac{1}{\alpha z})\alpha^2\mathbb{E}_{k,s}[\|\bar{g}_{k,s}\|^2] + (1+\alpha z)\mathbb{E}_{k,s}[\|x_{k,s} - x_{k,0}\|^2] \\
    &= (1+\frac{1}{\alpha z})\alpha^2\|g(x_{k,s})\|^2 + (1+\alpha z+(\alpha^2+\frac{\alpha}{z})\frac{L^2}{b})\mathbb{E}_{k,s}[\|x_{k,s} - x_{k,0}\|^2]
\end{align*}

Define \(R_{k,s} = f(x_{k,s}) + \lambda_s\|x_{k,s} - x_{k,0}\|^2\), where:
\[
    \lambda_s = \frac{1}{2}(L_{\nabla f} + 2K_H)\alpha^2\frac{L^2}{b} + \lambda_{s+1}(1+\alpha z+(\alpha^2+\frac{\alpha}{z})\frac{L^2}{b})
\]
\[
    \lambda_s = \frac{1}{4}\alpha - \lambda_{s+1}(1+\frac{1}{\alpha z})\alpha^2
\]

And \(\Lambda_{\min} = \min_{s\in[S]} \Lambda_s\)

Then:
\[
    \mathbb{E}[R_{k,s+1} - R_{k,s}] \leq -\Lambda_{\min}\mathbb{E}[\|g(x_{k,s})\|^2]
\]

Therefore:
\[
    \mathbb{E}[\|g(x_{k,s})\|^2] \leq \frac{\mathbb{E}[R_{k,s}] - \mathbb{E}[R_{k,s+1}]}{\Lambda_{\min}}
\]

Summing over \(s = 0,\ldots,S-1\):
\[
    \sum_{s=0}^{S-1}\mathbb{E}[\|g(x_{k,s})\|^2] \leq \frac{\mathbb{E}[R_{k,0}] - \mathbb{E}[R_{k,S}]}{\Lambda_{\min}} = \frac{\mathbb{E}[f(x_{k,0}) - f(x_{k+1,0})]}{\Lambda_{\min}}
\]

Summing over \(k = 0,1,2,\ldots,K\):
\[
    \sum_{k=0}^K\sum_{s=0}^{S-1}\mathbb{E}[\|g(x_{k,s})\|^2] \leq \frac{\mathbb{E}[f(x_{0,0})] - f_{\inf}}{\Lambda_{\min}}
\]

Finally, dividing both sides by \((K+1)S\):
\[
    \mathbb{E}\left[\frac{1}{(K+1)S}\sum_{k=0}^K\sum_{s=0}^{S-1}\|g(x_{k,s})\|^2\right] \leq \frac{\mathbb{E}[f(x_{0,0})] - f_{\inf}}{(K+1)S \cdot \Lambda_{\min}}
\]

This proof relies on all Assumptions 4.1-4.4, particularly the boundedness of the objective function (4.1), the unbiased gradient estimates (4.2), the variance bound (4.3), and the bounded iterates (4.4).
\end{proof}

\bibliographystyle{my-plainnat}
\bibliography{ref}

\appendix
%\pagebreak
\numberwithin{equation}{section}
\numberwithin{theorem}{section}

%\bar{\nabla}\L_k\input{append}

\end{document}